\newcommand{\N}{\mbox{$\mathbb{N}$}}
\hfill \footnotesize {\rm M. Eshaghi--Gordji and N. Ghobadipour
} \hfill
\hfill \footnotesize {\rm On the Mazur--Ulam theorem in fuzzy normed spaces }  \hfill$~$}
\begin{document}
\thispagestyle{empty}
 \setcounter{page}{1}

\begin{center}
{\large\bf On the Mazur--Ulam theorem  in fuzzy normed spaces
\vskip.25in

{\bf M. Eshaghi Gordji} \\[2mm]

{\footnotesize Department of Mathematics,
Semnan University,\\ P. O. Box 35195-363, Semnan, Iran\\
[-1mm] Tel:{\tt 0098-231-4459905} \\
[-1mm] Fax:{\tt 0098-231-3354082} \\
[-1mm] e-mail: {\tt madjid.eshaghi@gmail.com}}

{\bf N. Ghobadipour} \\[2mm]

{\footnotesize Department of Mathematics,
Semnan University,\\ P. O. Box 35195-363, Semnan, Iran\\
[-1mm] e-mail: {\tt ghobadipour.n@gmail.com }} }
\end{center}
\vskip 5mm

\noindent{\footnotesize{\bf Abstract.} In this paper, we  establish
the Mazur--Ulam theorem in the fuzzy strictly convex real normed spaces.\\

{\it Mathematics Subject Classification.} Primary 46S40; Secondary
39B52, 39B82, 26E50,
 46S50.\\

{\it Key words and phrases:}  Fuzzy normed space; Mazur--Ulam
theorem; Fuzzy isometry

  \newtheorem{df}{Definition}[section]
  \newtheorem{rk}[df]{Remark}
   \newtheorem{lem}[df]{Lemma}
   \newtheorem{thm}[df]{Theorem}
   \newtheorem{pro}[df]{Proposition}
   \newtheorem{cor}[df]{Corollary}
   \newtheorem{ex}[df]{Example}

 \setcounter{section}{0}
 \numberwithin{equation}{section}

\vskip .2in

\begin{center}
\section{Introduction}
\end{center}

In 1984, Katsaras \cite{Kat} defined a fuzzy norm on a linear space
and at the same year Wu and Fang \cite{Co} also introduced a notion
of fuzzy normed space and gave the generalization of the Kolmogoroff
normalized theorem for  fuzzy topological linear space. In
\cite{Bi}, Biswas defined and studied fuzzy inner product spaces in
linear space. Since then some mathematicians have defined fuzzy
metrics and norms on a linear space from various points of view
\cite{Ba,Fe,Kri,Shi,Xi}. In 1994, Cheng and Mordeson introduced a
definition of fuzzy norm on a linear space in such a manner that the
corresponding induced fuzzy metric is of Kramosil and Michalek type
\cite{Kra}. In 2003, Bag and Samanta \cite{Ba} modified the
definition of Cheng and Mordeson \cite{Che} by removing a regular
condition. They also established a decomposition theorem of a fuzzy
norm into a family of crisp norms and investigated some properties
of fuzzy norms (see \cite{Bag2}). Following \cite{Bag1}, we give the
employing notion of a fuzzy
norm.\\
Let X be a real linear space. A function $N : X \times \Bbb R
\longrightarrow [0,1]$ (the so--called fuzzy subset) is said to be a
fuzzy norm on X if for all $x, y \in X$ and all $a,b \in \Bbb
R$:\\
$(N_1)~~N(x,a)=0$ for $a\leq 0;$\\
$(N_2)~~x=0$ if and only if $N(x,a)=1$ for all $a>0;$\\
$(N_3)~~N(ax,b)=N(x,\frac{b}{|a|})$ if $a\neq0$;\\
$(N_4)~~N(x+y,a+b)\geq min \{N(x,a), N(y,b)\};$\\
$(N_5)~~N(x,.)$ is non--decreasing function on $\Bbb R$ and $\lim
_{a \to \infty} N(x,a)=1;$\\
$(N_6)~~$ For $x\neq 0,$ $N(x,.)$ is (upper semi) continuous on
$\Bbb R.$\\
The pair $(X,N)$ is called a fuzzy normed linear space. One may
regard $N(x,a)$ as the truth value of the statement "the norm of $x$
is less than or equal to the real number $a$".\\
\begin{ex}
Let $(X,\|.\|)$ be a normed linear space. Then $$N(x,a)=\left\{%
\begin{array}{ll}
   \frac{a}{a+\|x\|}, & a>0~~~, x \in X, \\
    0,~~~ & a\leq0, x \in X \\
\end{array}%
\right.    $$ is a fuzzy norm on X.
\end{ex}
Let $(X,N)$ be a fuzzy normed linear space. Let $\{x_n\}$ be a
sequence in X. Then $\{x_n\}$ is said to be convergent if there
exists $x \in X$ such that $\lim_{n \to \infty}N(x_n-x,a)=1$ for
all $a>0.$ In that case, $x$ is called the limit of the sequence
$\{x_n\}$ and we denote it by $N - lim_{n \to \infty} ~ x_n = x.$
A sequence $\{x_n\}$ in X is called Cauchy if for each $\epsilon >
0$ and each $a_0$ there exists $n_0$ such that for all $n\geq n_0$
and all $p > 0,$ we have $N(x_{n+p} - x_n, a) > 1$ - $\epsilon$.
It is known that every convergent sequence in  fuzzy normed space
is Cauchy. If each Cauchy sequence is convergent, then the fuzzy
norm is said to be complete and the fuzzy normed space is called a
fuzzy Banach space.\\
\begin{df}
A fuzzy normed space is called {\it strictly convex} if
$N(x+y,a+b)=min\{N(x,a),\\N(y,b)\}$ and $N(x,a)=N(y,b)$ imply
$x=y$ and $a=b.$
\end{df}
\begin{df}
Let $(X,N)$ and $(Y,N)$ be two fuzzy normed spaces. We call
$f:(X,N) \to (Y,N)$ a {\it fuzzy isometry} if
$N(x-y,a)=N(f(x)-f(y),a)$ for all $x,y \in X$ and all $a>0.$
\end{df}
\begin{df}
Let $X$ be a real linear space and $x,y,z$ mutually disjoint
elements of $X.$ Then $x,y$ and $z$ are said to be {\it collinear}
if $y-z=t(x-z)$ for some real number $t.$
\end{df}
The theory of isometry had its beginning in the important paper by
Mazur and Ulam \cite{Maz} in 1932.\\ In \cite{Maz}, Mazur and Ulam
proved the following theorem.
\begin{thm} (Mazur--Ulam)
Every isometry of a real normed linear space onto a real normed
linear space is a linear mapping up to translation.
\end{thm}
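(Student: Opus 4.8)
\medskip
\noindent\textbf{Proof plan.}
The plan is to reduce the theorem to the assertion that an isometry $f$ of a real normed space $(X,\|\cdot\|)$ \emph{onto} a real normed space $(Y,\|\cdot\|)$ with $f(0)=0$ must be linear: for an arbitrary surjective isometry $g\colon X\to Y$ the map $f:=g-g(0)$ is again a surjective isometry fixing the origin, and once $f$ is shown to be linear, $g=f+g(0)$ is ``linear up to translation''. To prove $f$ linear I would first establish that $f$ preserves midpoints, then deduce additivity, and finally use that isometries are continuous to upgrade additivity (hence $\mathbb{Q}$-homogeneity) to $\mathbb{R}$-linearity.

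The heart of the matter is the midpoint identity $f\!\left(\frac{a+b}{2}\right)=\frac{f(a)+f(b)}{2}$ for all $a,b\in X$, and here I would use the classical device of Mazur and Ulam. Fix $a,b$, put $m=\frac{a+b}{2}$, and let $H_{0}=\{z\in X:\|z-a\|=\|z-b\|=\tfrac12\|a-b\|\}$; this set is bounded and contains $m$. The point reflection $\varphi(z)=a+b-z$ is a bijective isometric involution of $X$ with $\varphi(m)=m$ and $\varphi(H_{0})=H_{0}$. Define recursively
\[ H_{n+1}=\Bigl\{z\in H_{n}:\ \sup_{w\in H_{n}}\|z-w\|\le\tfrac12\,\mathrm{diam}(H_{n})\Bigr\}. \]
An induction shows that $\varphi(H_{n})=H_{n}$, that $m\in H_{n}$ (for $w\in H_{n}$ we have $\varphi(w)\in H_{n}$ and $\|m-w\|=\tfrac12\|\varphi(w)-w\|\le\tfrac12\,\mathrm{diam}(H_{n})$), and that $\mathrm{diam}(H_{n+1})\le\tfrac12\,\mathrm{diam}(H_{n})$. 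Since each $H_{n}$ contains $m$ and the diameters tend to $0$, we get $\bigcap_{n}H_{n}=\{m\}$ --- and, notably, no completeness is needed, since $m$ is an explicit common point.

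Now I would run the identical construction in $Y$ starting from $f(a),f(b)$, obtaining sets $H_{n}'$ with $\bigcap_{n}H_{n}'=\{m'\}$, where $m'=\frac{f(a)+f(b)}{2}$. Because $f$ is a \emph{surjective} isometry (surjectivity being what gives the inclusion ``$\supseteq$'' at the first step) and the recursion is phrased purely metrically, one has $f(H_{n})=H_{n}'$ for every $n$; hence $f(m)\in\bigcap_{n}H_{n}'=\{m'\}$, i.e.\ $f\!\left(\frac{a+b}{2}\right)=\frac{f(a)+f(b)}{2}$. Taking $b=0$ and using $f(0)=0$ gives $f(a/2)=\tfrac12 f(a)$, so $f(a+b)=2f\!\left(\frac{a+b}{2}\right)=f(a)+f(b)$: thus $f$ is additive, therefore $\mathbb{Q}$-homogeneous, and, being an isometry, continuous, so $f(\lambda x)=\lambda f(x)$ for every $\lambda\in\mathbb{R}$. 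Hence $f$ is linear and $g$ is linear up to translation.

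The main obstacle I expect is the midpoint lemma: one must check that each $H_{n}$ is nonempty (it always contains $m$), that $\varphi$ really carries $H_{n}$ onto itself so that the estimate $\|m-w\|\le\tfrac12\,\mathrm{diam}(H_{n})$ is legitimate, that the diameters genuinely halve, and that the \emph{same} recursion is performed in $Y$ so that $f$ intertwines the two towers. If instead the target space is assumed \emph{strictly convex} --- the setting of interest for the fuzzy analogue below --- this machinery collapses to one line: strict convexity forces equality in $\|a-b\|\le\|a-z\|+\|z-b\|$ to hold only for $z=m$, so $H_{0}=\{m\}$, while $\|f(m)-f(a)\|=\|f(m)-f(b)\|=\tfrac12\|f(a)-f(b)\|$ shows $f(m)$ lies in the corresponding singleton $\{m'\}$, giving midpoint preservation at once.
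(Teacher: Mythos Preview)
Your argument is a correct and well-organized account of the classical Mazur--Ulam proof via the nested ``midpoint sets'' $H_n$, and the strictly-convex shortcut you sketch at the end is also valid. However, there is nothing in the paper to compare it with: Theorem~1.5 is \emph{quoted} in the introduction as a classical result of Mazur and Ulam (with a reference to \cite{Maz}) and is not proved anywhere in the paper. The paper's own contributions are Lemma~2.1 and Theorem~2.2, which concern the fuzzy setting; the classical Mazur--Ulam theorem serves only as historical background.

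If you want a point of contact, your final paragraph---the observation that strict convexity of the target collapses $H_0$ to a single point and yields midpoint preservation immediately---is exactly the idea the paper exploits in the fuzzy case. Lemma~2.1 is the fuzzy analogue of ``$H_0=\{m\}$ when $Y$ is strictly convex,'' and Theorem~2.2 then follows much as you outline, except that the passage from $\mathbb{Q}$-linearity to $\mathbb{R}$-linearity is handled not by continuity but by an additional collinearity hypothesis together with strict convexity. So your remark anticipates the paper's method, but the classical statement you were asked about is simply cited, not proved.
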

Also, Baker \cite{Bak} investigated the  Mazur--Ulam problem and
obtained the following result.
\begin{thm}\cite{Bak}
Let $X$ and $Y$ be real normed linear spaces and suppose that $Y$
is strictly convex. If $f:X \to Y$ is an isometry, then $f$ is
affine.
\end{thm}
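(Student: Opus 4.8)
The plan is to run Baker's original argument, whose engine is a purely metric description of midpoints that is available precisely because $Y$ is strictly convex; note that, in contrast with the original Mazur--Ulam theorem, no surjectivity of $f$ is needed. First I would dispose of the translation: set $g := f - f(0)$. Since $y \mapsto y - f(0)$ is a (surjective) isometry of $Y$, the map $g \colon X \to Y$ is again an isometry, $g(0) = 0$, and $g$ is affine if and only if $f$ is. So it suffices to prove that an isometry $g \colon X \to Y$ fixing the origin is $\R$-linear.

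The key step is a metric characterisation of the midpoint that holds in any strictly convex normed space (recall that strict convexity of $Y$ means: $\|a + b\| = \|a\| + \|b\|$ with $a, b \neq 0$ implies $a = tb$ for some $t > 0$). Namely, if $u, v, w \in Y$ satisfy $\|w - u\| = \|w - v\| = \tfrac12 \|u - v\|$ with $u \neq v$, then $w = \tfrac12(u+v)$: from $u - v = (u - w) + (w - v)$ the triangle inequality gives $\|u - v\| \le \|u - w\| + \|w - v\| = \|u - v\|$, so equality holds, whence by strict convexity $u - w = t(w - v)$ for some $t > 0$; equality of the two norms forces $t = 1$, i.e.\ $w = \tfrac12(u + v)$ (the case $u = v$ being trivial). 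This metric description of midpoints is the only place where strict convexity of $Y$ is used, and I expect it to be essentially the only obstacle; everything afterwards is formal manipulation.

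Next I would apply the characterisation with $u = g(x)$, $v = g(y)$ and $w = g\!\left(\tfrac{x+y}{2}\right)$. Because $g$ is an isometry, $\big\| w - u \big\| = \big\| \tfrac{x+y}{2} - x \big\| = \tfrac12 \|x - y\| = \tfrac12 \|g(x) - g(y)\| = \tfrac12 \|u - v\|$, and likewise $\|w - v\| = \tfrac12 \|u - v\|$. Hence $g\!\left(\tfrac{x+y}{2}\right) = \tfrac12 \big( g(x) + g(y) \big)$ for all $x, y \in X$; that is, $g$ satisfies Jensen's equation.

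Finally I would upgrade the Jensen identity to $\R$-linearity. Putting $y = 0$ and using $g(0) = 0$ gives $g\!\left(\tfrac{x}{2}\right) = \tfrac12 g(x)$, hence $g(2z) = 2 g(z)$ for all $z$; combining this with the Jensen identity yields $g(x+y) = 2\, g\!\left(\tfrac{x+y}{2}\right) = g(x) + g(y)$, so $g$ is additive. An isometry is $1$-Lipschitz, hence continuous, and a continuous additive map between real normed spaces is $\R$-linear ($\Q$-homogeneity from additivity, then $\R$-homogeneity by continuity). Undoing the normalisation, $f = g + f(0)$ is affine, as claimed.
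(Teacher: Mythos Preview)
The paper does not supply its own proof of this theorem; it is quoted from Baker \cite{Bak} as background in the introduction, without argument. Your proof is correct and is exactly Baker's classical argument: normalise to an origin-fixing isometry $g$, use strict convexity of $Y$ to characterise midpoints metrically, deduce the Jensen identity $g\bigl(\tfrac{x+y}{2}\bigr)=\tfrac12\bigl(g(x)+g(y)\bigr)$, upgrade to additivity using $g(0)=0$, and then pass from $\Q$-linearity to $\R$-linearity via continuity of isometries.

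For context, the paper's own contribution (Lemma~2.1 and Theorem~2.2) transports precisely this midpoint strategy to the fuzzy setting: Lemma~2.1 is the fuzzy analogue of your unique-midpoint characterisation, and the Jensen/additivity steps are carried over verbatim. The one genuine divergence is the final step from $\Q$- to $\R$-linearity. You invoke continuity (an isometry is $1$-Lipschitz), which is unproblematic in ordinary normed spaces; the paper, working in the fuzzy setting where such continuity is not automatic, instead imposes an extra hypothesis that $f$ preserves collinearity and uses it together with strict convexity to force the scalar to match. So your proof is both correct and fully aligned with the template the paper later adapts---indeed slightly cleaner in the classical case, since no collinearity assumption is needed.
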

Recently, V$\ddot{a}$is$\ddot{a}$l$\ddot{a}$ \cite{Vai} proved
that every bijective isometry
$f: X \to Y$ between normed linear spaces is affine.\\
In this paper, by using ideas of \cite{Bak, MS}, we investigate a
Mazur--Ulam type problem in the framework of strictly convex fuzzy
normed spaces.
\section{ Mail result }
 In this section we  establish
the Mazur--Ulam theorem in the fuzzy strictly convex real normed
spaces. We need the following lemma to prove the main theorem of our
paper.
\begin{lem} Let $X$ be a fuzzy normed space which is strictly
convex and let $a,b \in X$ and $s>0.$ Then $x=\frac{a+b}{2}$ is
the unique member of $X$ satisfying $$N(a-x,s)=N(a-b,2s)$$ and
$$N(b-x,s)=N(a-b,2s).$$
\end{lem}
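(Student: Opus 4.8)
\medskip
\noindent\textbf{Proof proposal.} The plan is to verify first that $x=\tfrac{a+b}{2}$ really does satisfy the two displayed equalities, and then to show that it is the \emph{only} such point, by extracting the equality case of the triangle axiom $(N_4)$ and feeding it into the strict convexity hypothesis.

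For existence I would simply compute $a-\tfrac{a+b}{2}=\tfrac12(a-b)$ and $b-\tfrac{a+b}{2}=-\tfrac12(a-b)$, and then apply $(N_3)$ with the scalars $\pm\tfrac12$: this converts $N\bigl(\tfrac12(a-b),s\bigr)$ into $N(a-b,2s)$, and likewise for the $b$-side, so both equations hold at $x=\tfrac{a+b}{2}$.

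For uniqueness, suppose $x\in X$ satisfies $N(a-x,s)=N(a-b,2s)$ and $N(b-x,s)=N(a-b,2s)$. I would write $a-b=(a-x)+(x-b)$ and apply $(N_4)$ to obtain
$$N(a-b,2s)\ \ge\ \min\{N(a-x,s),\,N(x-b,s)\}.$$
Using $(N_3)$ with the scalar $-1$ we have $N(x-b,s)=N(b-x,s)=N(a-b,2s)$, and by hypothesis $N(a-x,s)=N(a-b,2s)$ as well; hence the right-hand side above equals $N(a-b,2s)$, so the inequality is in fact an equality. Thus
$$N\bigl((a-x)+(x-b),\,s+s\bigr)=\min\{N(a-x,s),\,N(x-b,s)\}\quad\text{and}\quad N(a-x,s)=N(x-b,s),$$
which are precisely the two conditions in the definition of strict convexity, applied to the pair $a-x,\ x-b$ with the parameter $s$ in both slots. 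Strict convexity then forces $a-x=x-b$, i.e. $2x=a+b$, as required.

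The heart of the argument is the single decomposition $a-b=(a-x)+(x-b)$: once the three relevant values of $N$ are all pinned to the common number $N(a-b,2s)$, the $(N_4)$-inequality must be saturated, and strict convexity does the rest. Everything else is routine bookkeeping with $(N_3)$ (the sign change turning $N(x-b,s)$ into $N(b-x,s)$, and the factor $\tfrac12$ turning $N(\tfrac12(a-b),s)$ into $N(a-b,2s)$); notably the proof uses neither completeness nor the continuity axiom $(N_6)$, and it does not require $N(a-b,2s)>0$.
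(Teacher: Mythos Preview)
Your argument is correct and in fact cleaner than the paper's own proof. The existence half is identical, but for uniqueness you take a single candidate $x$ and apply strict convexity directly to the decomposition $a-b=(a-x)+(x-b)$, concluding $a-x=x-b$ and hence $x=\tfrac{a+b}{2}$.

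The paper instead assumes two solutions $u,v$, forms their midpoint $\tfrac{u+v}{2}$, establishes the inequalities
\[
N\Bigl(a-\tfrac{u+v}{2},s\Bigr)\ge N(a-b,2s),\qquad N\Bigl(b-\tfrac{u+v}{2},s\Bigr)\ge N(a-b,2s),
\]
and argues by contradiction (via $(N_4)$ applied to $a-b$) that at least one of these must be an equality; strict convexity is then applied to the pair $a-u,\,a-v$ to force $u=v$. Your route is shorter because it proves directly that any solution \emph{equals} the midpoint, rather than showing that any two solutions coincide; this removes the auxiliary midpoint $\tfrac{u+v}{2}$, the two-sided estimate, and the contradiction step. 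Both proofs rely on exactly the same ingredients---$(N_3)$, $(N_4)$, and one invocation of strict convexity---so neither is more general, but yours is the more economical packaging.
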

\begin{proof}
There is nothing to prove if $a=b.$ Let $a\neq b.$ Then by
$(N_3),$ we have $$N(a-x,s)=N(a-\frac{a+b}{2},s)=N(a-b,2s)$$ and
$$N(b-x,s)=N(b-\frac{a+b}{2},s)=N(a-b,2s).$$ To show the
uniqueness, assume that $u$ and $v$ are two elements of $X$
satisfying
$$N(a-u,s)=N(a-v,s)=N(b-u,s)=N(b-v,s)=N(a-b,2s).\hspace{2cm}$$
Then
$$N(a-\frac{u+v}{2},s)\geq min \{N(a-u,s),N(a-v,s)\}=N(a-b,2s) \hspace{2cm}(2.1)$$
and $$N(b-\frac{u+v}{2},s)\geq min
\{N(b-u,s),N(b-v,s)\}=N(a-b,2s).\hspace{2cm}(2.2)$$ If both of
inequalities $(2.1)$ and $(2.2)$ were strict we would have
\begin{align*}
N(a-b,2s)&=N(a-\frac{u+v}{2}+\frac{u+v}{2}-b,2s)\\
&\geq min
\{N(a-\frac{u+v}{2},s),N(b-\frac{u+v}{2},s)\}\\
&>N(a-b,2s),
\end{align*}
a contradiction. So at least one of the equalities holds in $(2.1)$
and $(2.2).$ Without loss of generality assume that equality holds
in $(2.1).$ Then $$N(a-\frac{u+v}{2},s)= min
\{N(a-u,s),N(a-v,s)\}.$$ By the strict convexity, we obtain
$N(a-u,s)=N(a-v,s)$ that is $u=v.$
\end{proof}
\begin{thm}
 Let $X$ and $Y$ be  fuzzy  real normed
spaces and let $Y$ be  strictly convex. Suppose  $f: X \to Y$ is a
fuzzy isometry satisfies  $f(a),f(b)$ and $f(c)$ are collinear when
$a,b$ and $c$ are collinear. Then $f$ is affine.
\end{thm}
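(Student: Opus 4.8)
The plan is to show first that $f$ preserves midpoints, $f\big(\tfrac{a+b}{2}\big)=\tfrac{f(a)+f(b)}{2}$ for all $a,b\in X$, and then to promote this to affineness. Fix $a,b\in X$, the case $a=b$ being trivial, and set $m=\tfrac{a+b}{2}$. Just as in the opening computation of the preceding lemma, $(N_3)$ yields $N(a-m,s)=N(a-b,2s)$ and $N(b-m,s)=N(a-b,2s)$ for every $s>0$. Using that $f$ is a fuzzy isometry on each of these occurrences,
\[
N\big(f(a)-f(m),s\big)=N(a-m,s)=N(a-b,2s)=N\big(f(a)-f(b),2s\big),
\]
and likewise $N\big(f(b)-f(m),s\big)=N\big(f(a)-f(b),2s\big)$. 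Thus $f(m)$ is a member of the strictly convex space $Y$ satisfying the two equations which, by the preceding lemma applied with the points $f(a),f(b)\in Y$, single out $\tfrac{f(a)+f(b)}{2}$ uniquely; hence $f(m)=\tfrac{f(a)+f(b)}{2}$. (When $a\ne b$ the triple $a,m,b$ is collinear, so the hypothesis forces $f(a),f(m),f(b)$ to be collinear too --- the geometric fact underlying this identification --- and the same device will be used again below.)

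Next put $g=f-f(0)$, so that $g(0)=0$ and $g$ still preserves midpoints; taking $b=0$ gives $g(x/2)=\tfrac12 g(x)$, and then midpoint preservation rewrites as the Cauchy equation $g(x+y)=g(x)+g(y)$, whence $g(qx)=q\,g(x)$ for all $q\in\Q$. To reach real scalars I would use that a fuzzy isometry is automatically continuous for the fuzzy-norm topology --- if $x_n\to x$ then $N(x_n-x,a)\to1$ for all $a>0$, so $N(f(x_n)-f(x),a)=N(x_n-x,a)\to1$, giving $f(x_n)\to f(x)$ and hence $g$ continuous. For $t\in\R$, pick rationals $q_n\to t$; from $(N_3)$ and $(N_5)$ one has $q_nx\to tx$ in $X$ and $q_ng(x)\to tg(x)$ in $Y$, so continuity of $g$ together with uniqueness of limits (immediate from $(N_2)$ and $(N_4)$) gives $g(tx)=tg(x)$. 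Thus $g$ is linear and $f=g+f(0)$ is affine. Alternatively, one can feed the scaling through the collinearity hypothesis: for $t\notin\{0,1\}$ the points $0,x,tx$ are collinear, so $f(0),f(x),f(tx)$ are collinear and $g(tx)=\tau g(x)$ for some real $\tau$; computing $N\big(g(tx),r\big)$ two ways with $(N_3)$ forces $|\tau|=|t|$, and applying the same to $1-t$ in place of $t$ together with $g(x)=g(tx)+g((1-t)x)$ removes the sign ambiguity, so $\tau=t$.

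The $(N_3)$ manipulations and the passage to Cauchy's equation are routine. The step that needs genuine care is the promotion of $\Q$-homogeneity to $\R$-homogeneity of $g$, and with it the bookkeeping forced by the fact that $N(x,\cdot)$ is only assumed non-decreasing: a relation such as $N(x,r)=N(x,\lambda r)$ for all $r>0$ cannot simply be cancelled, but must be iterated and combined with $\lim_{a\to\infty}N(x,a)=1$ and $(N_2)$ to conclude $\lambda=1$. I expect that iteration-and-limit maneuver --- whether it enters via continuity of $f$ or via the collinearity route --- to be the real crux of the proof.
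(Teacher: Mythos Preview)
Your argument is correct. The midpoint preservation via Lemma~2.1 and the reduction to $\Q$-linearity of $g=f-f(0)$ are exactly what the paper does (the paper works directly with $h=f-f(0)$, but the computation is identical).

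For the passage from $\Q$- to $\R$-homogeneity, however, the paper takes only your \emph{alternative} route. It uses the collinearity hypothesis to write $h(ra)=r'h(a)$, then argues rather tersely, invoking strict convexity and without your iteration-and-limit maneuver, from $N(r(a-b),s)=N(r'(a-b),s)$ to $r=r'$. Your main route via continuity of fuzzy isometries is a genuine departure: it bypasses the collinearity hypothesis entirely at this stage and avoids the delicate cancellation in the non-injective function $N(x,\cdot)$, relying instead on rational approximation and uniqueness of fuzzy limits. This is cleaner and in fact shows that, once midpoint preservation is secured by strict convexity of $Y$, the collinearity assumption is not needed for $\R$-linearity. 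Your alternative, collinearity-based argument is also more careful than the paper's: you isolate the step $N(x,s)=N(x,\lambda s)\ \Rightarrow\ \lambda=1$ (for $x\neq0$) and supply the iteration with $(N_5)$ and $(N_2)$, and you resolve the sign ambiguity explicitly via the additive identity $g(x)=g(tx)+g((1-t)x)$, neither of which the paper spells out.
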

\begin{proof}
We prove that $f-f(0)$ is linear. Let $h(a)=f(a)-f(0).$ Then $h$ is
a fuzzy isometry and $h(0)=0.$ Also we have
$$N(h(\frac{a+b}{2})-h(a),s)=N(\frac{a+b}{2}-a,s)=N(a-b,2s)$$  and similarly
$$N(h(\frac{a+b}{2})-h(b),s)=N(\frac{a+b}{2}-b,s)=N(a-b,2s)$$
for all $a,b \in X$ and all $s>0.$ It follows from Lemma $2.1$ that
$$h(\frac{a+b}{2})=\frac{1}{2}h(a)+\frac{1}{2}h(b).$$ Since
$h(0)=0,$ we can easily show that $h$ is additive. It follows that
$h$ is $\Bbb Q-$ linear. Let $r \in \Bbb R$ with $r\neq 1$ and $a
\in X.$ Since $h(0)=0$ and $h(0)$ is collinear, then  there exists a
real number $r'$ such that $h(ra)=r'h(a).$ Now, we prove that
$r=r'.$ Since $a$ and $b$ are collinear, then $a\neq b.$ Hence,
\begin{align*}
N(r(a-b),s)=N(h(ra)-h(rb),s)=N(r'(h(a)-h(b)),s)=N(r'(a-b),s).
\end{align*}
By the strict convexity we obtain $r(a-b)=r'(a-b).$ Thus
$h(ra)=rh(a)$ for all $a \in X$ and all $r \in \Bbb R.$ Hence, $f$
is affine.
\end{proof}

\section{Conclusion} We  study the
notion of fuzzy strictly convex normed spaces, and then we establish
the Mazur--Ulam theorem in the fuzzy strictly convex real normed
spaces.

{\small


}
\end{document}